\newtheorem{theorem}{Theorem}[section]
\newtheorem{lemma}[theorem]{Lemma}
\theoremstyle{definition}
\newtheorem{definition}[theorem]{Definition}
\theoremstyle{remark}
\numberwithin{equation}{section}
\newcommand{\R}{{\mathbb R}}
\newcommand{\N}{{\mathbb N}}
\def\loc{{\rm loc\,}}
\def\supp{{\rm supp\,}}
\def\ds{\displaystyle}
\numberwithin{equation}{section}
\def\ol{\overline}
\title[Laplace equation in weighted Sobolev spaces]{Global solvability of the Laplace equation in weighted Sobolev spaces}
\author[B.T. Bilalov]{Bilal T. Bilalov}
\address{Institute of Mathematics and Mechanics\\
 Azerbaijan\\
 e-mail:  b\_bilalov@mail.ru}
\author[N.P. Nasibova]{Natavan P. Nasibova}
\address{Azerbaijan State Oil and Industry University\\
 e-mail: natavan.nasibova@asoiu.edu.az}
\author[L.G. Softova]{Lubomira G. Softova}
\address{Department of Mathematics
University of Salerno, Italy\\
 e-mail:  lsoftova@unisa.it}
\author[S. Tramontano]{Salvatore Tramontano}
\address{Department of Mathematics
University of Salerno, Italy\\
 e-mail: stramontano@unisa.it}
\keywords{Laplace equation, nonlocal problem, weak solutions, strong solutions,  weighted Sobolev spaces.}
\subjclass[2020]{35A01; 35J05; 35K05; 42C15}
\begin{document}

\begin{abstract}

We consider a non-local boundary value problem for the  Laplace  equation in unbounded strip,  studding the weak  and strong solvability  of that problem in the framework of the weighted Sobolev space $W^{1,p}_\nu$ with a  Muckenhoupt weight. Making use of non-harmonic analysis tools, we proved that any weak solution belonging  to  $W_{\nu}^{2,p}$  is also a strong solution and satisfies the corresponding boundary conditions.  It should be noted that such problems do not fit into the general theory of elliptic equations and require a particular approach. 
\end{abstract}

\maketitle

\noindent
\textbf{Keywords:} Laplace equation, infinite strip,   biorthonormal systems, weak and strong solutions,  weighted Sobolev spaces.
\bigskip

\noindent
\textbf{MSC2020: Primary 35A01; Secondary 35J25; 35J05;  42C05; 42C15}

\section{Introduction}

The classical existence and regularity  theory for linear  PDEs  
leave untreated  a lot of problems, arising in the mechanics and the mathematical physics.    An example  of such a model problem is the following degenerate elliptic equation, studied by a Moiseev in \cite{6}.  More precisely,   he considered the  following degenerate equation in infinity strip:
\begin{equation} \label{eq-1-1}
\begin{cases}
y^m u_{xx} +u_{yy}=0 & (x,y)\in(0,2\pi)\times(0,\infty)\\
u(x,0)=f(x) &x \in (0,2\pi) \\
u(0,y)=u(2\pi,y) &y \in (0,\infty)\\
u_x (0,y)=0 & y\in (0,\infty)
\end{cases}
\end{equation}
with $m>-2$ and $f\in C^{2,\alpha}[0,2\pi]$, where  $\alpha\in (0,1).$  This problem is non-local and  the boundary conditions are given on semi infinite lines. 
Under  the natural assumption  of boundedness of the solution at infinity, the author obtained existence and uniqueness of aclassical solution and an explicit integral representation for it, that permits to release the regularity assumptions on $f.$

 Similar boundary  problems but  for mixed type equations  are investigated  by Frankl in his study  on  the transonic flow around symmetric airfoils (see \cite{7,8}). 
 More results on existence of classical solutions of  problem \eqref{eq-1-1} are obtained in  \cite{9}, in the case of uniformly  elliptic equation, and in \cite{10} for multidimensional parabolic equation.

We start our studies  with the case $m=0$, while the degenerate problem is an object of further research.
Our goal is twofold, to obtain strong/weak solvability  of \eqref{eq-1-1}  and to study the regularity of that solutions in new function spaces.  Our interest is pointed on  weighted Lebesgue spaces  $L^p_\nu, $ where $p\in(1,\infty)$ and the  weight function $\nu$ belongs to the Muckenhoupt class $A_p.$ Since we study  problem \eqref{eq-1-1}  in unbounded, with respect to $y,$ domain we suppose that the weight depends only on $x.$ So the Sobolev space builded upon consists of functions heaving distributional derivatives in some weighted  $x$-space and  integrable with respect to $y.$  

Starting with existence result for weak solutions in $W^{1,p}_\nu,$ we show that under suitable boundary conditions this weak solution is strong too.

Let us note,  that this  problem cannot be treated with the classical methods developed for linear elliptic operators. 
Our technique is based on the spectral theory and the approach developed in \cite{6}.
Precisely, we make use  of biorthonormal systems and Fourier series technique  in Banach function  spaces, see Duffin and Schaeffer \cite{DS}. This approach permits  to extend   harmonic analysis’ methods beyond the Hilbert spaces  in  hopes of applying   the   Fourier series methods  (see also \cite{Ch,Ol,Z}).

The present paper extends some  results obtained in \cite{6,9,11},  passing from classical to generalized  solutions.

In the following we use  the standard notation:
\begin{itemize}

\item $\R_+=(0,\infty)$ and $\N_0=\N \cup \{0 \}$;

\item $\Pi=(0,2\pi)\times(0,\infty)$ is unbounded strip in $\R^2$ with boundary 
$
\partial \Pi= J_0 \cup \overline{J} \cup  J_{2\pi},
$
where
\begin{align*}
J&=\{ x\in (0,2\pi), \ y=0\},\\
J_0 &=\{x=0,  \  y\in(0,\infty)\},\\
J_{2\pi } &=\{x=2\pi, \ y\in(0,\infty) \};
\end{align*}

\item $C_{2\pi}^{\infty}(\ol J)=\{\eta(x) \in C^{\infty} ([0,2\pi])  : \   \eta(2\pi)=0\};$

\item the letter $C$ indicates a positive constant, which value varies from  line to line. 
\end{itemize}

\section{Auxiliary  results}
Consider  the following set of {\it test functions}
\begin{equation}\label{eq-1}
\begin{split}
C_{J_0}^{\infty}(\overline{\Pi})=\Big\{&\varphi \in C^{\infty} (\overline{\Pi}):  \ \varphi |_{J\cup J_{2\pi}}=0, \quad  \exists \, \xi_{\varphi} >0 : \\
&\varphi (x,y)= 0 \quad \forall\, (x,y)\in [0,2\pi] \times [\xi_{\varphi},\infty) \Big\}. 
\end{split}
\end{equation}

Let $\nu :(0,2\pi)\to [0,\infty]$ be a  weight function, $\nu\in L^1(0,2\pi)$,  and  $|\nu ^{-1} (\{0;\infty\})|=0.$
We consider the weighted Lebesgue space $L^p_\nu (\Pi)$, $p\in (1,\infty)$, endowed by the norm  
$$
\| u\|_{L^p_\nu(\Pi)} =\int_0^{\infty}\Big(\int_0^{2\pi}|u(x,y)|^p \, \nu (x) \, dx \Big)^{\frac{1}{p}}\,dy.
$$
The corresponding $\nu$-weighted Sobolev space $W^{m,p}_\nu (\Pi)$ is defined as the set of all measurable functions, having distributional  derivatives up to order $m$ in the space $L^p_\nu(\Pi)$ for which the following norm is finite 
$$
\| u\|_{W^{m,p}_\nu (\Pi)} =\sum_{0\leq |\alpha |\leq m}\| D^{\alpha} u\|_{L^p_\nu (\Pi)}.
$$
Take  $p\in (1,\infty)$, by $L^p_\nu(0,2\pi)$ and $W_{\nu}^{m,p}(0,2\pi)$, we denote the Lebesgue and Sobolev spaces over the interval $(0,2\pi)$, endowed by the respective norms
$$
\| f\|_{L^p_\nu(0,2\pi)}= \Big (\int_0^{2\pi} |f(x)|^p \nu(x)\, dx 
\Big )^{\frac{1}{p}}, \qquad
\| f\|_{W^{m,p}_\nu(0,2\pi)} =\sum_{k=0}^m \| f^{(k)} \|_{L^p_\nu(0,2\pi)}.
$$
 For  completeness of the exposition, we define the Muckenhoupt class $A_p(0,2\pi)$ of $2\pi$-periodic weights $\nu$ that  satisfy the  condition
\begin{equation}\label{eq-Muck}
\sup_{I\subset \R} \Big (\frac{1}{|I|} \int_I\nu(x)\, dx \Big) \Big (\frac{1}{|I|} \int_I \nu(x)^{-\frac{1}{p-1}} \, dx \Big )^{p-1}=[\nu]_p <\infty,
\end{equation} 
where the supremum    is taken over all bounded  intervals $I\subset \R$ and $[\nu]_p$ is the Muckenhoupt constant of $\nu.$  An immediate consequence of the definition \eqref{eq-Muck} is that  if $\nu \in A_p(0,2\pi)$ then  $\nu,\nu^{-1/(p-1)}\in L^1_\loc(\R).$    Moreover,
the following properties hold (cf. \cite{13}).

\begin{lemma}\label{lem-RH}   
    Let $\nu\in A_p(0,2\pi),$ $p\in(1,\infty)$ 
    \begin{itemize}
        \item {\em Inclusion property:}    there  exists $q\in (1,p)$ such that $\nu \in A_q(0,2\pi);$ 

    \item {\em Reverse H\"older inequality:}
     there exists  $\delta >0$ depending only on $p$ and  $[\nu]_p,$ such that
    $$
\Big( \frac{1}{|I|} \int_I \nu(x)^{1+\delta}\, dx  \Big)^{\frac{1}{1+\delta}}\leq \frac{C_\delta}{|I|}\int_I \nu(x)\, dx
    $$
  for each $I,$  where the  constant $C_\delta$ does not depend  on $I.$ 
       \end{itemize}
\end{lemma}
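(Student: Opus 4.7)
The plan is to prove the two assertions in order, with the reverse Hölder inequality carrying the main technical content and the inclusion (self-improvement) property following from it by a formal duality argument. The underlying idea is that the $A_p$ condition is not merely a size comparison but has a built-in self-improving character, which becomes visible once one decomposes $\nu$ at a stopping level.

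For the reverse Hölder inequality I would follow the classical Coifman--Fefferman route. The preliminary step is to extract from the $A_p$ hypothesis the $A_\infty$-type comparison
$$
\Big(\frac{|E|}{|I|}\Big)^p\leq [\nu]_p\,\frac{\nu(E)}{\nu(I)},\qquad E\subset I,
$$
obtained by writing $|E|=\int_E\nu^{1/p}\nu^{-1/p}\,dx$, applying Hölder's inequality with exponents $p,p'$, and inserting the $A_p$ bound on $\int_I\nu^{-1/(p-1)}\,dx$. Next, for a fixed interval $I_0\subset\R$ with $\bar\nu_0=\frac{1}{|I_0|}\int_{I_0}\nu(x)\,dx$ and $\lambda>\bar\nu_0$, perform the Calderón--Zygmund stopping-time decomposition of $\nu$ on $I_0$ at level $\lambda$ to produce a disjoint family $\{I_j^\lambda\}$ of maximal subintervals with $\lambda<\frac{1}{|I_j^\lambda|}\int_{I_j^\lambda}\nu\leq c\lambda$, outside of which $\nu\leq\lambda$ almost everywhere. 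Combining Chebyshev's inequality on each $I_j^\lambda$ with the $A_\infty$ comparison above yields a good-$\lambda$ estimate
$$
\nu\big(\{x\in I_0:\nu(x)>\beta\lambda\}\big)\leq C\beta^{-\kappa}\,\nu\big(\{x\in I_0:\nu(x)>\lambda\}\big),\qquad \beta\geq\beta_0,
$$
with a strict gain $\kappa>0$. Iterating in $\lambda$ and integrating via the layer-cake representation of $\int_{I_0}\nu^{1+\delta}\,dx$ for $\delta>0$ small enough gives the averaged reverse Hölder bound of the lemma, with $\delta$ depending only on $p$ and $[\nu]_p$.

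For the inclusion property I would exploit the natural duality of the $A_p$ classes: if $\nu\in A_p(J)$ then $\sigma:=\nu^{-1/(p-1)}$ lies in $A_{p'}(J)$ with $[\sigma]_{p'}=[\nu]_p^{1/(p-1)}$. Applying the reverse Hölder inequality just proved to $\sigma$ produces some $\delta'>0$ and $C>0$ such that
$$
\Big(\frac{1}{|I|}\int_I\sigma(x)^{1+\delta'}\,dx\Big)^{\frac{1}{1+\delta'}}\leq\frac{C}{|I|}\int_I\sigma(x)\,dx
$$
on every bounded interval $I\subset\R$. Defining $q\in(1,p)$ by $\frac{1}{q-1}=\frac{1+\delta'}{p-1}$, one has $\sigma^{1+\delta'}=\nu^{-1/(q-1)}$; raising the displayed inequality to the power $p-1$ and multiplying by $\frac{1}{|I|}\int_I\nu\,dx$, the $A_p$ bound of $\nu$ converts into the $A_q$ bound with constant $C^{p-1}[\nu]_p$, which is precisely $\nu\in A_q(J)$.

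The main obstacle I anticipate is the good-$\lambda$ step: the $A_p$ condition alone provides only the trivial Chebyshev decay $|\{x\in I_j^\lambda:\nu(x)>\beta\lambda\}|\leq (c/\beta)|I_j^\lambda|$, and one has to upgrade this into a \emph{strict} improvement over $\beta^{-1}$ in order for the layer-cake integration to close with a positive $\delta$. This is exactly where the $A_\infty$ comparison of the first step is essential, and the quantitative dependence of $\delta$ on $p$ and $[\nu]_p$ flows from that estimate. Once the reverse Hölder exponent is in hand, the inclusion property follows from the duality manipulation above with no further analytic input.
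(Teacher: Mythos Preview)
Your argument is correct and follows the classical Coifman--Fefferman scheme: extract the $A_\infty$ comparison from Hölder's inequality, run a Calderón--Zygmund stopping-time decomposition to obtain a good-$\lambda$ inequality with a genuine gain, integrate via the layer-cake formula to get the reverse Hölder exponent $\delta$, and then deduce the inclusion property by applying the reverse Hölder bound to the dual weight $\sigma=\nu^{-1/(p-1)}\in A_{p'}$.

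The paper, however, does not prove this lemma at all: it is stated with the remark that ``the following properties hold (cf.~\cite{13})'' and is simply cited from Garcia-Cuerva and Rubio de Francia. So there is no ``paper's own proof'' to compare against; you have supplied exactly the argument one finds in that reference, which is more than the authors intended for the reader to reconstruct.
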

For our goals we assume in addition that the weight satisfies the condition $\nu(x)=\nu(2\pi-x)$ for a.e. $x\in(0,2\pi).$

The next lemma gives a characterisation  of the weighted spaces that we are going to use and the proof follows  easily from \cite[Lemma~2.6]{25}.
\begin{lemma}    \label{L2.1} 
Let $\nu \in  A_p(0,2\pi),$ with $1<p<\infty$. Then
 \begin{enumerate}
\item $L^p_\nu(0,2\pi)\subset L^1(0,2\pi)$ continuously embedded; 
\item $\overline{C_0^{\infty} (0,2\pi)}=L^p_\nu (0,2\pi)$ and the closure is taken with respect to the norm  in $L^p_\nu(0,2\pi).$
\end{enumerate}
\end{lemma}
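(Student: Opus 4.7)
The plan is to derive both claims from the basic integrability consequences of the Muckenhoupt condition: specifically, $\nu \in L^1(J)$ (built into the hypotheses on the weight) and $\nu^{-1/(p-1)} \in L^1(J)$, which follows at once from \eqref{eq-Muck} applied to $I=J$ (a bounded interval), as observed in the text just after the Muckenhoupt definition.

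For the embedding in (1), I would split $|u|$ multiplicatively as $|u|=(|u|\,\nu^{1/p})\cdot\nu^{-1/p}$ and apply H\"older's inequality with exponents $p, p'$ to obtain
\[
\int_J |u(x)|\, dx \le \Big(\int_J |u|^p \nu\, dx\Big)^{1/p} \Big(\int_J \nu^{-1/(p-1)}\, dx\Big)^{1/p'},
\]
where I used $p'/p=1/(p-1)$. The second factor is a finite constant depending only on $p$ and $[\nu]_p$, which gives $\|u\|_{L^1(J)} \le C \|u\|_{L^p_\nu(J)}$, hence the continuous inclusion.

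For the density in (2), I would use a two-step approximation. First, truncate and shrink the support: given $f\in L^p_\nu(J)$, put $f_n=f\,\chi_{E_n}$ with $E_n=\{x\in J:\ |f(x)|\le n,\ \operatorname{dist}(x,\partial J)>1/n\}$. Then $f_n\to f$ pointwise a.e.\ and $|f-f_n|^p\nu\le 2^p|f|^p\nu\in L^1(J)$, so dominated convergence gives $f_n\to f$ in $L^p_\nu(J)$. Second, for each bounded, compactly supported $f_n$, the mollification $f_n\ast\phi_\varepsilon$ belongs to $C_0^\infty(J)$ for $\varepsilon$ small (thanks to the $1/n$-margin from $\partial J$), is uniformly bounded by $\|f_n\|_\infty$, and is supported in a fixed compact set $K\subset J$ with $\int_K \nu\, dx <\infty$; since $f_n\ast\phi_\varepsilon\to f_n$ almost everywhere, a second application of dominated convergence yields convergence in $L^p_\nu$. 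A standard diagonal choice then produces a sequence in $C_0^\infty(J)$ converging to $f$. The main bookkeeping issue is ensuring the mollified functions actually vanish near both endpoints $0$ and $2\pi$, which is precisely why the $1/n$-margin was built into $E_n$; no heavier machinery (such as the boundedness of the maximal function in $L^p_\nu$) is required, since each step is controlled by an $L^1_\nu$ majorant.
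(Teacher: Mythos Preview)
Your argument is correct. The paper does not actually supply a proof of this lemma; it merely states that ``the proof follows easily from \cite[Lemma~2.6]{25}'' and moves on, so there is no detailed argument to compare against. That said, your H\"older splitting in part~(1) is exactly the computation the authors carry out later in \eqref{TM2.3} (and again in the proof of Theorem~\ref{T3.1}), so in spirit your approach to (1) coincides with what the paper does whenever it needs this embedding. Your density argument for (2) via truncation plus mollification and dominated convergence is the standard self-contained route and is perfectly adequate here; the only point worth making explicit is that the a.e.\ convergence $f_n*\phi_\varepsilon\to f_n$ holds at Lebesgue points of $f_n$, but you clearly have this in mind.
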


In what follows we need the  Young-Hausdorff inequality  related to  the classical system of   functions $\{1,\cos nx, \sin nx\}_{n\in\N}.$

\begin{theorem}[Young-Hausdorff \cite{1}]\label{thmYH}
Let $f\in L^p(0,2\pi),$   $ 1<p\leq 2 $ and  consider   for $n \in \N_0$ the integrals
\begin{equation}\label{eq-Fourier}
f_n^c =\int_0^{2\pi} f(x)\cos nx\,dx,  \qquad f_n^s =\int_0^{2\pi} f(x)\sin nx\,dx. 
\end{equation}
Then   $\{f_{n}^c,f_n^s\}_{n\in\N_0} \subseteq l_{p'}$ and  there  exists a constant $C,$ depending  only on $p,$ such that
\begin{equation}\label{eq-YH1}
\Big (|f_0^c|^{p'}+ \sum_{n=1}^{\infty} (|f_n^c|^{p'} + |f_n^s|^{p'}) 
\Big )^{\frac{1}{p'}} \leq C \| f \|_{L^p(0,2\pi)}.
\end{equation}

Conversely, if $\{f_{n}^c,f_n^s\}_{n\in\N_0} \subseteq l_{p}, 1<p\leq 2$, then $f\in L^{p'}(0,2\pi), p'=p/(p-1)$ and  there  exists a constant $C',$ depending  only on $p,$ such that 
\begin{equation}\label{eq-RH2}
\| f \|_{L^{p'}(0,2\pi)} \leq C' \Big (|f_0^c|^p +\sum_{n=1}^{\infty} (|f_n^c|^p + |f_n^s|^p) \Big )^{\frac1p}.
\end{equation}
\end{theorem}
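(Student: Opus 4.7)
The plan is to prove both inequalities by Riesz--Thorin complex interpolation between the endpoints $p=1$ and $p=2$, then deduce the converse part by a duality argument.

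First I would set up the forward inequality. Consider the operator $T$ that sends $f\in L^p(J)$ to the sequence of its trigonometric Fourier coefficients $\{f_0^c, f_1^c, f_1^s, f_2^c, f_2^s,\ldots\}$ defined by \eqref{eq-Fourier}. At the endpoint $p=2$, Parseval's identity (applied to the orthogonal but not normalized system $\{1,\cos nx,\sin nx\}$) gives
\[
|f_0^c|^2 + \tfrac12\sum_{n=1}^{+\infty}\bigl(|f_n^c|^2 + |f_n^s|^2\bigr) = \tfrac{1}{2\pi}\|f\|_{L^2(J)}^2,
\]
so $T\colon L^2(J)\to \ell^2$ is bounded. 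At $p=1$, the trivial estimate $|f_n^c|,|f_n^s|\leq \|f\|_{L^1(J)}$ shows that $T\colon L^1(J)\to \ell^\infty$ is bounded with norm at most $1$. The Riesz--Thorin interpolation theorem, applied with $\frac{1}{p}=\frac{1-\theta}{1}+\frac{\theta}{2}$ and $\frac{1}{p'}=\frac{1-\theta}{\infty}+\frac{\theta}{2}$ for $\theta\in[0,1]$, then yields boundedness $T\colon L^p(J)\to \ell^{p'}$ for every $p\in[1,2]$, which is precisely \eqref{eq-YH1}.

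Next I would address the converse statement \eqref{eq-RH2}. The natural approach is to interpret the synthesis operator $S$, which sends a sequence $\{a_0,a_n^c,a_n^s\}$ to the function $a_0 + \sum_{n=1}^\infty(a_n^c\cos nx + a_n^s \sin nx)$, as the formal adjoint of $T$ (up to scalar normalization of the trigonometric basis). Since $T\colon L^p\to\ell^{p'}$ is bounded for $1\le p\le 2$, duality gives that $S\colon \ell^p\to L^{p'}$ is bounded for $1\le p\le 2$, which is exactly inequality \eqref{eq-RH2}. Alternatively one can interpolate $S$ directly between the endpoints $\ell^1\to L^\infty$ (triangle inequality, since $\sum(|a_n^c|+|a_n^s|)$ bounds the sup of the series) and $\ell^2\to L^2$ (again Parseval), obtaining $\ell^p\to L^{p'}$ for $p\in[1,2]$. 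Partial sums have to be used to make convergence rigorous in $L^{p'}$ before passing to the limit.

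The main obstacle I expect is technical rather than conceptual: one has to justify the convergence of the Fourier series in the converse direction inside $L^{p'}$, and also carefully set up the Riesz--Thorin framework on a sequence space indexed by the mixed cosine/sine system (keeping track of the factor $1/2$ coming from the fact that $\|\cos nx\|_{L^2}^2=\|\sin nx\|_{L^2}^2=\pi$ while $\|1\|_{L^2}^2=2\pi$). These constants can be absorbed into the constants $C$ and $C'$, so they do not affect the statement, only the sharp values of the constants. Since the theorem is attributed to \cite{1}, I would simply cite that reference for the detailed computations and present the interpolation sketch above as justification.
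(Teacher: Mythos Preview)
Your interpolation sketch is correct and is the standard proof of the Hausdorff--Young inequality. However, the paper does not actually prove Theorem~\ref{thmYH}: it is stated as a cited result from \cite{1} (Mikhlin's textbook) and used as a black box in the proof of Theorem~\ref{T3.2}. There is therefore nothing in the paper to compare your argument against; your proposal simply supplies a proof where the paper gives none.
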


$x\in (0,2\pi)$ We consider the following systems of  functions defined on  $x\in (0,2\pi)$
\begin{equation} \label{eq-2-1}  
\{y_n\}_{n\in\N_0}=\{y_0^c=1,y_n^c(x)=\cos nx, y_n^s(x)= x \sin nx\}_{n\in \N},   
\end{equation}
\begin{equation} 
\begin{split}\label{eq-2-2}  
&\{\vartheta_n\}_{n\in\N_0}=
\{\vartheta^c_0,\vartheta^c_n,\vartheta^s_n\}_{n\in \N}, \\  
&\vartheta^c_0(x)=\frac{2\pi-x}{2\pi^2};  \quad \vartheta^c_n(x)=\frac{2\pi-x}{\pi^2} \cos nx; \quad  \vartheta^s_n(x)=\frac{1}{\pi^2} \sin nx.
\end{split}
\end{equation}

Direct calculations show that  they form {\it biorthonormal systems} in $L^2(0,2\pi).$  More precisely, for any $f\in L^2(0,2\pi)$ we define the linear continuous operator $b_\vartheta(\cdot)$ acting on  $L^2(0,2\pi)$ as  
$$
b_\vartheta(f)=(f;\vartheta)=\int_0^{2\pi} f(x)\vartheta(x)\, dx.
$$
The systems \eqref{eq-2-1}-\eqref{eq-2-2} are  called   {\it biorthonormal} iff
\begin{equation}\label{eq-2-1a}
\begin{split}
(y_k^c;\vartheta^c_n)=\delta_{kn}, &\qquad (y_k^s; \vartheta^s_n)=\delta_{kn},\\
 (y_k^s; \vartheta^c_n)=0, &\qquad 
(y_k^c; \vartheta^s_n)=0.
\end{split}
\end{equation}
Moreover, \eqref{eq-2-1} forms  a {\it   Riesz basis} in $L^2(0,2\pi)$  (cf.   \cite{6}), that is, we can expand any function $f\in L^2(0,2\pi)$ in {\it  biorthonormal  series }  of the form
\begin{equation}\label{eq-Riesz}
f(x)\sim (f;\vartheta^c_0) y^c_0(x) +\sum_{n=1}^\infty \Big((f;\vartheta_n^c) y_n^c(x)+
(f;\vartheta_n^s) y_n^s(x)\Big)
\end{equation}
where  $b_{\vartheta_n}\in (L^2(0,2\pi))^*.$

Our goal is to extend this theory to  the weighted  Lebesgue space      $L^p_\nu(0,2\pi)$ with $\nu\in A_p.$ 
\begin{theorem}\label{T2.1} 
 Let $\nu \in A_p(0,2\pi)$ with $ p\in(1,\infty).$ Then the system \eqref{eq-2-1}  forms a  basis in  $L^p_\nu(0,2\pi)$.
\end{theorem}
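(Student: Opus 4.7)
My plan is to verify the three standard conditions for $\{y_n\}_{n\in\N_0}$ to be a Schauder basis of $L^p_\nu(J)$: (a) each coefficient functional $f\mapsto(\vartheta_n,f)$ is continuous on $L^p_\nu(J)$; (b) the partial-sum projections $S_N$ are uniformly bounded on $L^p_\nu(J)$; (c) $S_N f\to f$ holds on a dense subset, from which convergence on the whole space follows by a Banach--Steinhaus argument. Item (a) is immediate from the boundedness of each $\vartheta_n$ and the embedding $L^p_\nu(J)\subset L^1(J)$ in Lemma \ref{L2.1}(1). For (c), when $f\in C_0^\infty(J)$ the Riesz basis expansion \eqref{eq-Riesz} already yields $S_N f\to f$ in $L^2(J)$ (in fact uniformly); combining this with the density $\overline{C_0^\infty(J)}=L^p_\nu(J)$ from Lemma \ref{L2.1}(2) and the uniform bound obtained in (b) transfers the convergence to the $L^p_\nu$-topology.

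The technical core of the proof is (b). Writing the coefficients $(\vartheta_n^c,f)$ and $(\vartheta_n^s,f)$ in terms of the classical Fourier coefficients of $f$ and $xf$ yields the decomposition
\begin{equation*}
S_N f(x) = 2\,P_N f(x) - \frac{1}{\pi}\,P_N(xf)(x) + \frac{x}{\pi}\,Q_N f(x),
\end{equation*}
where $P_N h$ and $Q_N h$ are the cosine and sine partial sums of the Fourier series of $h$ on $J$. Using $P_N h=\tfrac12(S_N^F h+S_N^F\tilde h)$, $Q_N h=\tfrac12(S_N^F h-S_N^F\tilde h)$ with $\tilde h(x)=h(2\pi-x)$ and $S_N^F$ the circular Fourier partial sum, together with $\widetilde{xh}(x)=(2\pi-x)\tilde h(x)$, a direct rearrangement collapses this to the commutator identity
\begin{equation*}
S_N f = S_N^F f + \frac{1}{2\pi}\bigl[S_N^F,M_x\bigr](\tilde f - f),
\end{equation*}
where $M_x$ is multiplication by $x$. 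The Hunt--Muckenhoupt--Wheeden theorem provides $\|S_N^F h\|_{L^p_\nu}\le C\|h\|_{L^p_\nu}$ uniformly in $N$ for $\nu\in A_p$, and since $M_x$ is a bounded multiplier, the commutator $[S_N^F,M_x]$ is uniformly bounded on $L^p_\nu(J)$.

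The remaining difficulty is that $\tilde f$ is naturally estimated in the reflected weight $\tilde\nu(x)=\nu(2\pi-x)$, which is again $A_p$ but in general not comparable to $\nu$, so the crude bound $\|[S_N^F,M_x](\tilde f-f)\|_{L^p_\nu}\le C(\|f\|_{L^p_\nu}+\|\tilde f\|_{L^p_\nu})$ does not close. The resolution must exploit the antisymmetry $\widetilde{(\tilde f-f)}=-(\tilde f-f)$ about $x=\pi$ together with the explicit kernel $-(x-t)D_N(x-t)/\pi$ of the commutator. A kernel-level analysis, pairing this contribution against the companion term $(x+t)D_N(x+t)$ that arises when the reflection is absorbed under the integral, should produce a cancellation that eliminates the reflected-weight piece and delivers the uniform estimate $\|S_N f\|_{L^p_\nu}\le C\|f\|_{L^p_\nu}$. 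This kernel cancellation is, in my view, the main obstacle of the proof; once it is secured, the remaining steps are routine, and uniqueness of the expansion follows from the biorthonormality relations \eqref{eq-2-1a} together with the continuity of $(\vartheta_m,\cdot)$ on $L^p_\nu(J)$.
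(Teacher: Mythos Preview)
Your proposal is not a complete proof: you yourself flag the reflected-weight difficulty as ``the main obstacle'' and offer only the hope that ``a kernel-level analysis \ldots should produce a cancellation''. That step is never carried out, so the uniform bound on the projectors --- the heart of the argument --- remains unestablished.

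The paper avoids this detour by a single substitution. Rather than expanding $2\pi-x$ and writing the coefficients in terms of $f$ and $xf$, set $g(x)=\dfrac{2\pi-x}{\pi^2}\,f(x)$. Then $(\vartheta_n^c;f)=\int_0^{2\pi}g(t)\cos nt\,dt$ is \emph{exactly} the ordinary cosine Fourier coefficient of~$g$, while $(\vartheta_n^s;f)=\frac{1}{\pi^2}\int_0^{2\pi}f(t)\sin nt\,dt$. Hence
\[
S_{n,m}(f)(x)=\Big(\tfrac12(1;g)+\sum_{k=1}^n(\cos kx;g)\cos kx\Big)+\frac{x}{\pi^2}\sum_{k=1}^m(\sin kx;f)\sin kx,
\]
a trigonometric partial sum of $g$ plus $x/\pi^2$ times one of $f$. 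The paper then invokes the weighted basis theorem for the trigonometric system (Hunt--Young, \cite{31}) to bound each piece by $C\|g\|_{L^p_\nu}$ and $C\|f\|_{L^p_\nu}$, and closes with the \emph{pointwise} inequality $|g(x)|\le\frac{2}{\pi}|f(x)|$, valid for every weight. No reflection $\tilde f$, no commutator identity, no kernel cancellation enters. Your decomposition $2P_Nf-\frac{1}{\pi}P_N(xf)+\frac{x}{\pi}Q_Nf$ is the same object written after an unnecessary splitting of the factor $2\pi-x$; keeping it intact is precisely what makes the paper's argument short.

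As a side remark, the obstacle in your route can also be dissolved without any cancellation: $[S_N^F,M_x]$ is convolution with $-sD_N(s)$, and $|sD_N(s)|=\big|\tfrac{s}{\sin(s/2)}\big|\cdot|\sin(N+\tfrac12)s|$ is bounded on the circle uniformly in $N$. Thus the commutator maps $L^1(J)\to L^\infty(J)$ with a uniform constant; since $\|\tilde f-f\|_{L^1}\le 2\|f\|_{L^1}\le C\|f\|_{L^p_\nu}$ by Lemma~\ref{L2.1} and $L^\infty(J)\subset L^p_\nu(J)$ (because $\nu\in L^1(J)$), the term $[S_N^F,M_x](\tilde f-f)$ is already controlled by $C\|f\|_{L^p_\nu}$.
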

\begin{proof} 
First of all, we have to show that  \eqref{eq-2-1} is {\it minimal system}  and for this goal, it is sufficient to prove  that \eqref{eq-2-2}  is biorthonormal  system to \eqref{eq-2-1}  in $L^p_\nu(0,2\pi)$.
We can observe that  the  functional 
$
b_{\vartheta_n^c}(\cdot)=(\vartheta_n^c;\cdot), $  $  n\in \N_0
$ is uniformly  bounded in $L^p_\nu(0,2\pi).$
Indeed,  for any $f\in L^p_\nu(0,2\pi),$  using the uniform boundedness of the system $\{ \vartheta_n\},$ and  the H\"older inequality   we  obtain 
\begin{equation} \label{TM2.3} 
    \begin{split}
    | b_{\vartheta_n^c} (f)|&\leq \frac2\pi \int_0^{2\pi} |f(x)| \nu(x)^{\frac{1}{p}}\nu(x)^{-\frac{1}{p}}\,dx  \\
&\leq \frac2\pi \Big(\int_0^{2\pi}| f|(x)^p\nu(x)\,dx \Big)^{\frac{1}{p}} \Big(\int_J\nu(x)^{-\frac{p'}p}\,dx \Big)^{\frac{1}{p'}}  \\
&\leq 4\| f \|_{L^p_\nu(0,2\pi)}  [\nu]_p^{\frac1p}\|\nu\|_{L^1(0,2\pi)}^{-\frac1p}\leq C  \| f \|_{L^p_\nu(0,2\pi)}
\end{split}
\end{equation}
that implies  $b_{\vartheta_n^c} (\cdot)\in \big(L^p_\nu(0,2\pi)\big)^*$ for all $n \in \N_0$.  
Moreover, by  \eqref{eq-2-1a} we have 
\begin{equation*}
   b_{\vartheta^c_n}(y^c_k)=\delta_{kn},\quad 
   b_{\vartheta^c_n}(y^s_k)=0 \quad
   \forall \, k,n\in \N.
   \end{equation*}
   In the same way, we can see that  $b_{\vartheta^s_n}(\cdot) \in (L^p_\nu(0,2\pi))^*$, and it holds
\begin{equation*}  b_{\vartheta^s_n}(y^s_k)=\delta_{kn},\quad 
   b_{\vartheta^s_n}(y^c_k)=0,\quad
   \forall \, k,n\in \N.
\end{equation*}
This implies  biorthogonality of  \eqref{eq-2-2} and \eqref{eq-2-1} in $L^p_\nu(0,2\pi)$ and hence also  the minimality of \eqref{eq-2-1} in $L^p_\nu (0,2\pi)$. 

We need to show now that \eqref{eq-2-1} is also {\it complete} in $L^p_\nu(0,2\pi)$. By density arguments and  Lemma~\ref{L2.1}, it is sufficient to show that an arbitrary function from $C_0^{\infty}(0,2\pi)$ can be approximated by linear combination of functions belonging to   the system \eqref{eq-2-1}  in $L^p_\nu(0,2\pi)$.

Let $f\in C_0^\infty(0,2\pi)$ and consider 
$$
g(x)=\frac{2\pi-x}{\pi^2} f(x), \qquad  g\in C_0^{\infty}(0,2\pi).
$$
 For all $n \in \N$ we  consider the biorthogonal coefficients of $f$: 
\begin{equation}\begin{split}\label{eq-f}
      (f;\vartheta_0^c)= &\frac{1}{2\pi^2} \int_0^{2\pi} f(x)(2\pi-x)\, dx
    =\frac12(g;1)=\frac12 g_0 \\
     (f;\vartheta_n^c)= &\frac{1}{\pi^2} \int_0^{2\pi}f(x)\,(2\pi-x)\cos nx\,dx
    =(g;\cos nx)=g_n^c,,\\
       (f;\vartheta_n^s)=  &\frac{1}{\pi^2} \int_0^{2\pi}f(x) \sin nx\,dx
     =\frac1{\pi^2}(f;\sin nx)=\frac1{\pi^2} f_n^s.
     \end{split}
     \end{equation}
Integrating by parts twice and making use of the regularity of $f,$ we obtain the bounds
\begin{equation}\label{eq-ff} 
\begin{split}
 |(f;\vartheta_n^c)|\leq &\    \frac{1}{n^2}\int_0^{2\pi}\big| g''(x)\cos nx\big|\,dx\leq 
 \frac{c}{n^2}, \\[5pt]
    | (f;\vartheta_n^s)|\leq   &\  \frac{1}{n^2\pi^2}\int_0^{2\pi} \big| f''(x) \sin nx \big|\,dx \leq  \frac{c}{n^2}
\end{split}
\end{equation}
These estimates guaranty the total convergence of the biorthonormal  series 
 \begin{equation} \label{TM2.4} 
F(x)= (f;\vartheta_0^c)+\sum_{n=1}^{\infty}\Big((f;\vartheta_n^c)\cos nx+ (f;\vartheta_n^s) x\sin nx\Big ),
\end{equation}
and also the uniform convergence,  by the Weierstrass theorem. 
 According to the results of \cite{6}, the   system \eqref{eq-2-1} forms a basis in $L^2(0,2\pi)$ and hence $F=f.$  By Lemma~\ref{L2.1}, it follows also that \eqref{TM2.4} converges to $f$  in $L^p_\nu(0,2\pi)$ and  hence \eqref{eq-2-1} is a basis in $L^p_\nu(0,2\pi)$. Indeed,    considering  the  projectors
\begin{equation}
\begin{split}\label{def-Snm}
S_{n,m}(f)(x)= &\  \sum_{k=0}^n (f;\vartheta_k^c) y_k^c(x)+ \sum_{k=1}^m (f;\vartheta_n^s)
 y_k^s(x)\\
= &\  \frac{1}{2} g_0+\sum_{k=1}^n g_k^c \cos kx+\frac{x}{\pi^2} \sum_{k=1}^m f_k^s \sin kx.
\end{split}
\end{equation}
 Because of  the orthogonality of the trigonometric system  
 $L^p_\nu(0,2\pi)$-norm of  \eqref{def-Snm}
\begin{align*}
\| S_{n,m} (f)\|_{L^p_\nu (0,2\pi)} &\leq \Big \| \frac{1}{2}g_0 +  \sum _{k=1}^n g_k^c \cos kx \Big\|_{L^p_\nu(0,2\pi)}  + \frac2\pi\Big\|\sum_{k=1}^m  f_k^s\sin kx \Big\|_{L^p_\nu(0,2\pi)} 
    \end{align*}
where, in the first norm we have a partial sum of the  Fourier series for $g\in L^p_\nu(0,2\pi),$ while, in the second norm, we have the corresponding partial  sum for $f\in L^p_\nu(0,2\pi).$ 
Since the trigonometric system  forms a basis in $L^p_\nu (0,2\pi)$  iff $\nu \in A_{p}(0,2\pi)$ (cf. \cite{31}), we obtain  
    \begin{align*}
 \| S_{n,m} (f)\|_{L^p_\nu (0,2\pi)} 
    & \leq c \Big (\| g \|_{L^p_\nu(0,2\pi)} 
+  \|f\|_{L^p_\nu(0,2\pi)} \Big)\leq c\| f\|_{L^p_\nu(0,2\pi)}
\end{align*}
where we have used that $|g(x)|\leq C |f(x)|$ whether $x \in (0,2\pi)$ and the last estimate holds 
for all  $n,m \in \N$, with a constant  independent of $f.$ This implies that the projectors 
$\{S_{n,m} \}$ are uniformly bounded in $L^p_\nu(0,2\pi)$ and hence, the system \eqref{eq-2-1} forms a basis in $L^p_\nu(0,2\pi)$.
\end{proof}

\section{Solvability results}

Let us consider the following non-local problem for the Laplace equation, written  in formal way
\begin{align} \label{eq-3-1}
\begin{cases}
\Delta u(x,y)=0  &  \text{ for a.e. }  (x,y)\in \Pi \\
u(0,y)=u(2\pi,y) & \text{ for a.e. }  y\in(0,\infty) \\
u(x,0)=f(x) & \text{ for a.e. } x \in (0,2\pi) \\
u_x(0,y)= h(y) &  \text{ for a.e.  } y \in (0,\infty)
\end{cases}
\end{align}
where we initially  suppose that $f\in L^1(0,2\pi)$ and $h\in L^1 (\R_+).$

Under a {\it weighted strong solution} of \eqref{eq-3-1}, we mean a  function $u\in W_\nu^{2,p} (\overline{\Pi }),$  $\nu\in L^1(0,2\pi)$  verifying  the partial differential equation in  \eqref{eq-3-1} and the boundary conditions a.e.

For any $\xi>0$, we denote by  $\Pi_{\xi} = (0,2\pi) \times (0,\xi)$  a bounded rectangle in $\R^2$. Taking a test function $\varphi \in C_{J_0}^{\infty} (\overline{\Pi}),$    multiplying  the equation in \eqref{eq-3-1}, integrating over $\Pi,$  and applying   the Gauss-Ostrogradsky theorem we obtain
\begin{equation*}
\begin{split}
    0&=\iint_{\Pi }\Delta u(x,y)\varphi(x,y)\, dxdy =\iint_{\Pi_{ \xi_{\varphi}}}\Delta u(x,y)\varphi(x,y)\, dxdy \\
&=-\iint_{\Pi _{\xi_{\varphi}} }\nabla u(x,y)\nabla \varphi(x,y)\, dxdy
+ \int _{\partial \Pi _{\xi_{\varphi} } }\varphi(x,y)\, \frac{\partial u(x,y)}{\partial\bm  n}\, dl 
\end{split}
\end{equation*}
where ${\bm n}$ is the outer normal to $\partial \Pi_{\xi_\varphi}.$  
Applying the  boundary conditions of \eqref{eq-3-1}, taking into account that  on $J_0$ we have ${\bm n}=(-1,0),$ we obtain 
$$
\iint_{\Pi}\nabla u\nabla \varphi \,dxdy =-\int_0^{\infty }\varphi (0,y)h(y)\, dy, \qquad 
\forall \, \varphi \in C_{J_0}^{\infty}(\overline{\Pi}) .  
$$ 
This permits us to  give the following notion of weak solution.

\begin{definition}\label{D3.2} 
A function $u\in W^{1,p}_\nu (\Pi),$  $p\in(1,\infty)$ is a {\em weak solution} of  \eqref{eq-3-1} if it is differentiable  in distributional sense  and
\begin{equation} \label{eq-3-4}
\begin{cases}
\displaystyle \iint_{\Pi}\nabla u\nabla \varphi \,dxdy   +\int_0^{\infty}\varphi(0,y)h(y)\,dy=0\\ 
u(0,y)=u(2\pi, y)\quad y\in(0,\infty) \\
u(x,0)=f(x)\qquad\ x \in (0,2\pi)
\end{cases}
\end{equation} 
  for each $\varphi \in C_{J_0}^{\infty}(\overline{\Pi})$.
\end{definition}
\begin{theorem}\label{T3.1}  
Let $\nu \in A_p (0,2\pi)$, $1<p<\infty $, $f\in W_{\nu }^{1,p}(0,2\pi)$ such that $f(0)=f(2\pi )=0$ and $h\in L^1 (\R_+)$. If  problem \eqref{eq-3-4}  has a weak solution  $u\in W^{1,p}_\nu(\Pi)$ then it is unique. 
\end{theorem}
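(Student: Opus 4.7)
The plan is to reduce uniqueness to the vanishing of all biorthonormal coefficients of the difference $w := u_1 - u_2$ in the basis from Theorem~\ref{T2.1}, and then solve the elementary ODEs that these coefficients must satisfy.

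First, subtracting two weak formulations shows that $w \in W^{1,p}_\nu(\Pi)$ satisfies
$$\iint_\Pi \nabla w \cdot \nabla\varphi\, dx\, dy = 0 \qquad \forall\, \varphi \in C^\infty_{J_0}(\overline\Pi)$$
together with the homogeneous data $w(0,y) = w(2\pi, y)$ and $w(x,0) = 0$. By Lemma~\ref{L2.1}, $w(\cdot,y) \in W^{1,p}_\nu(J) \subset W^{1,1}(J)$ for a.a.\ $y$, so I would define
$$c_0(y) := (\vartheta_0^c; w(\cdot,y)),\quad c_n^c(y) := (\vartheta_n^c; w(\cdot,y)),\quad c_n^s(y) := (\vartheta_n^s; w(\cdot,y)).$$
Mimicking the estimate \eqref{TM2.3} gives $|c_n(y)| \le C \| w(\cdot,y)\|_{L^p_\nu(J)}$, and combined with $\int_0^{+\infty}\|w(\cdot,y)\|_{L^p_\nu(J)}\, dy < +\infty$ this yields $c_n \in L^1(\R_+)$. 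Since $w_y \in L^p_\nu(\Pi)$, the map $y\mapsto w(\cdot,y)$ is continuous into $L^p_\nu(J)$, so each $c_n$ is continuous on $[0,+\infty)$ and satisfies $c_n(0)=0$.

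Next I would test against the separable $\varphi(x,y) = \vartheta_k(x)\psi(y)$ with $\psi \in C^\infty_c(0,+\infty)$; this lies in $C^\infty_{J_0}(\overline\Pi)$ because $\vartheta_k(2\pi) = 0$ for every $k$ and $\psi(0) = 0$. Integrating by parts in $x$---legitimate since $w(\cdot,y) \in W^{1,1}(J)$---the boundary contributions $[w\,\vartheta_k']_0^{2\pi}$ vanish thanks to the periodicity $w(0,y) = w(2\pi,y)$ and the direct check that $\vartheta_k'(0) = \vartheta_k'(2\pi)$ for every $k$. Combining this with the algebraic identities $(\vartheta_0^c)'' \equiv 0$, $(\vartheta_n^s)'' = -n^2 \vartheta_n^s$, and $(\vartheta_n^c)'' = 2n\vartheta_n^s - n^2 \vartheta_n^c$, together with Fubini and differentiation under the integral in $y$, transforms the weak identity into the distributional ODEs on $(0,+\infty)$
$$c_0'' = 0, \qquad (c_n^s)'' = n^2 c_n^s, \qquad (c_n^c)'' = n^2 c_n^c - 2n c_n^s.$$

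Because these are linear with smooth coefficients, each $c_n$ is $C^\infty(0,+\infty)$ and its general solution involves $e^{\pm ny}$ (and $1,y$ in the case $k=0$). The $L^1(\R_+)$-integrability kills the growing exponentials and the linear term, while $c_n(0)=0$ annihilates the remaining constants, so every coefficient vanishes identically. By the basis property proved in Theorem~\ref{T2.1}, $w(\cdot,y) \equiv 0$ in $L^p_\nu(J)$ for a.a.\ $y$, whence $u_1 = u_2$. The main obstacle is the clean passage from the weak identity to the distributional ODEs: the $x$-integration by parts, the Fubini exchange, and the differentiation under the integral sign must be justified using only first-order regularity of $w$ in the weighted setting. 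The crucial ingredient suppressing the growing exponentials is the unusual $L^1_y L^p_{\nu,x}$ structure of the $W^{1,p}_\nu(\Pi)$-norm, which provides exactly the required integrability of the coefficients at infinity.
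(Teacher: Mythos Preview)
Your proposal is correct and follows essentially the same route as the paper: reduce to the homogeneous problem, form the biorthonormal coefficients of the difference, test the weak identity against separable functions $\vartheta_k(x)\psi(y)$ (the paper uses scalar multiples of the same $\vartheta_k$), derive the second-order ODEs for the coefficients, and kill them using the zero initial data and integrability at infinity. The only noticeable deviation is that you correctly exploit the $L^1_y L^p_{\nu,x}$ structure of the norm to place the coefficients in $L^1(\R_+)$, whereas the paper states $L^p(\R_+)$; your version is the one consistent with the norm as defined, and either suffices to exclude the exponentially growing solutions.
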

\begin{proof} 
In order to obtain uniqueness of the weak  solution of \eqref{eq-3-4} it is enough to prove the existence of only trivial solutions  of  the homogeneous problem  
\begin{align} \label{eq-3-5} 
\begin{cases}
\displaystyle \iint_{\Pi }\nabla u\, \nabla \varphi \, dxdy=0 & \forall \, \varphi \in C_{J_0}^{\infty}  (\overline{\Pi}) \\[5pt]
 u(0,y) = u(2\pi,y)  & y\in(0,\infty) \\[5pt]
 u(x,0)=0 &    x\in (0,2\pi).
\end{cases} 
\end{align} 
For any bounded domain $\Pi_{\xi}$  we have (as in  \eqref{TM2.3}) 
 $$
 \|u\|_{W^{1,1}(\Pi_{\xi })}\leq c   \|u\|_{W^{1,p}_\nu(\Pi_{\xi })}<\infty
 $$
with a constant $c=c(p,\|\nu\|_{L^1(0,2\pi)},[\nu]_p).$
 Then  $u$ has a trace  $u^{\xi} $ on  the upper boundary $J_{\xi} =\{(x,\xi ): \ x\in (0,2\pi)\}$ and by the density of   $C^{\infty } (\overline{{\Pi }}_{\xi })$  in $W^{1,1}(\Pi_{\xi })$  we have 
\begin{equation}\label{eq-3-4a}
\begin{split} 
&u^{\xi } (x):=u(x,\xi)  =\int _0^{\xi }\frac{\partial u(x,y)}{\partial y}\, dy,\\
 &u^0(x) =0, \qquad\qquad  \text{ for a.e. } x\in (0,2\pi).
 \end{split}
\end{equation}
It follows immediately  that  $u^\xi \in L^p_\nu (0,2\pi)$  and moreover the estimate
\begin{equation}\label{eq-xi}
\| u^\xi  \|_{L^p_\nu (0,2\pi)} \leq  c_{\xi} \| u\| _{W_{\nu }^{1,p}(\Pi )}
\end{equation}
is valid, with a constant  $c_{\xi } $   depending  only on $\xi $ and $p.$ 

In order to write 
the developing in series  of the solution of \eqref{eq-3-5} we calculate the   biorthonormal  coefficients of $u(x,y)$ as in  \eqref{eq-f}, that is  
\begin{equation}\label{eq-coeff}
\begin{split}
u_0^c(y)& =(u^y;\vartheta_0^c)=\frac{1}{2\pi^2 } \int _0^{2\pi } u^y (x)(2\pi -x)\, dx \\
u_n^c (y)& = (u^y;\vartheta_n^c)   =\frac{1}{\pi^2} \int _0^{2\pi }u^{y}(x)(2\pi -x)\cos nx\, dx,\\
u_n^s(y)& =(u^y;\vartheta_n^s)=\frac{1}{\pi^2} \int _0^{2\pi }u^y (x)\sin nx\, dx.
\end{split}
\end{equation}
Then the biorthonormal series of $u$  has the form
\begin{equation} \label{eq-3-11} 
u(x,y)\sim u_0^c(y)+\sum _{n=1}^{\infty}\Big (u_n^c(y)\cos nx+u_n^s(y)x\sin nx \Big ).  
\end{equation} 
It follows immediately, by \eqref{eq-xi} and the initial condition of \eqref{eq-3-5}, that $\|u^y\|_{L^p_\nu(0,2\pi)}$ vanishes  as $y\to 0^+,$ 
and  hence $u_0^c(0)=u_n^c(0)=u_n^s(0)=0.$ 

 For any  $\psi(y) \in C_0^{\infty } (\R_+ ),$   the functions  $\varphi_n (x,y)=\psi (y)\sin nx $  belong to $ C_{J_0}^{\infty } (\overline{\Pi}).$ Moreover, 
  $\frac{\partial u}{\partial y} \in L^1 (\Pi _{\xi } )$ then  (cf.    \cite{1}) 
the functions  $\{u_n^c,u_n^s \}_{n\in \N_0}$ are differentiable and 
$$
\frac{d u_n^s(y)}{d y} =\frac{1}{\pi^2} \int _0^{2\pi }\frac{\partial u(x,y)}{\partial y} \sin nx\, dx.
$$
Multiplying   both sides by $\psi'(y),$ integrating in $y$ over $\R_+,$ using the fact that $u$ solves   problem \eqref{eq-3-5} and taking into account that $\psi(0)=0$, we obtain
\begin{equation}
\begin{split}\label{calc1}
\int_0^{\infty }&\frac{d u_n^s(y)}{d y} \psi'(y)\, dy
=\frac{1}{\pi^2 } 
 \iint_{\Pi }\frac{\partial u(x,y)}{\partial y}  \psi'(y)\sin nx\, dxdy \\
=&\frac{1}{\pi^2} \iint _{\Pi}\frac{\partial u}{\partial y} 
\frac{\partial \varphi_n}{\partial y} \, dxdy
= -\frac{1}{\pi^2} \iint_{\Pi }\frac{\partial u}{\partial x} \, \frac{\partial \varphi_n}{\partial x}\, dxdy  \\
=&-\frac{n}{\pi^2} \iint _{\Pi }\frac{\partial u(x,y)}{\partial x} \psi (y)\cos nx \,dxdy\\
=&-\frac{n^2}{\pi^2 } \iint_{\Pi }u(x,y)\psi(y) \sin nx\, dxdy \\
=&-n^2\int _0^{\infty }u_n^s(y)\psi (y)\, dy.
\end{split}
\end{equation}

   In order to  estimate the $L^p$ norms of the coefficients  we calculate
  \begin{align*}
|u_n^s(y)|^p &\leq \frac{1}{\pi^{2p}} \Big (\int _0^{2\pi}|u(x,y)|\, dx \Big )^p\\
&=  \frac{1}{\pi^{2p}} \Big 
(\int_0^{2\pi}|u(x,y)|\nu(x)^{\frac{1}{p}}\nu(x)^{-\frac{1}{p}}\, dx \Big )^p \\
&\leq \frac{1}{\pi^{2p}} \Big  (\int_0^{2\pi} \nu(x)^{-\frac{p'}{p}} \,dx
\Big )^{\frac{p}{p'}} 
\int_0^{2\pi} |u(x,y)|^p \nu(x)\,  dx
\end{align*}
and hence
$$
 \| u_n^s \|_{L^p(\R_+)} \leq 
 c  \| u\|_{L^p_\nu (\Pi )} 
$$
with a constant $c=c(p,\|\nu\|_{L^1(0,2\pi)}, [\nu]_p).$
In a similar way,  we establish  that $\frac{d u_n^s}{dy} \in L^p (\R_+ ).$
Moreover, direct calculations show that the second derivative of $u_n^s$ exists in distributional sense. Precisely 
 \begin{align*}
\int_0^{\infty}&\frac{d^2u_n^s(y)}{dy^2}\psi(y)\,dy
 =\frac{1}{\pi^2} \iint_\Pi \frac{\partial^2u(x,y)}{\partial y^2}\psi(y)\sin nx\,dx dy\\
     &=-\frac{1}{\pi^2} 
\iint_\Pi   
   \frac{\partial u(x,y)}{\partial y} \psi'(y)\sin nx \,dxdy = n^2\int _0^{\infty }u_n^s(y)\psi (y)\, dy
 \end{align*}
  that implies $\frac{d^2 u_n^s}{dy^2} =n^2 u_n^s$  for a.e.  $y\in \R_+$  and   $u_n^s \in W^{2,p}(\R_+).$  Hence   for all $n\in \N$  the functions $u_n^s$  solve  
\begin{equation} 
\begin{cases}\label{eq-3-4b}
\dfrac{d^2 u_n^s(y)}{dy^2}=n^2 u_n^s(y) & \qquad \text{ for a.e. } y\in \R_+ \\[6pt]
u_n^s(0)=0 & \lim_{y\to\infty} |u(x,y)|=0.
\end{cases}
\end{equation}
Since we are looking for bounded solutions we consider general solution of the form $u_n^s(y)=a  e^{-ny}.$  It is easy to see that the initial condition in  \eqref{eq-3-4b}
 gives that $a=0,$ that is \eqref{eq-3-4b} has only  trivial solution and  hence    $u_n^s(y)=0.$

 In order to calculate  the   coefficients $u_n^c$ we use similar reasoning, defining the functions  
$$
\phi_0 (x,y)=\frac{1}{2\pi^2} \psi (y)(2\pi -x), \quad
\phi_n(x,y)=\frac{1}{\pi ^2} \psi (y)(2\pi -x)\cos nx
$$
for any  choice of $\psi \in C_0^{\infty} (\R_+).$ 
Calculations similar to those above and using the boundary conditions of \eqref{eq-3-5}  give
\begin{align*}
\int_0^{\infty }\frac{d u_0^c(y)}{dy} \psi'(y)\, dy &=\frac{1}{2\pi^2} \iint_{\Pi }
\frac{\partial u(x,y)}{\partial y} \psi '(y)(2\pi -x)\, dxdy \\
&=\iint_{\Pi} \frac{\partial u}{\partial y} \frac{\partial \Phi_0}{\partial y}\, dxdy = -\iint_{\Pi} \frac{\partial u}{\partial x} \frac{\partial \Phi_0}{\partial x}\, dxdy  \\
&=\frac{1}{2\pi^2} \int_0^{\infty} \Big( \int_0^{2\pi}\frac{\partial u(x,y)}{\partial x} \, dx\Big)\psi(y)\,dy=0. 
\end{align*}
 From the {\it Fundamental Lemma of  Calculus of Variations} it follows 
$$
\frac{d^2 u_0^c}{dy^2}=0\quad \Longrightarrow \quad  u_0^c(y)=ay+b.
$$
  Since  $u_0^c \in L^p (\R_+),$ then   necessarily we have that  $a=b=0.$ 

On the other hand, for all $n \in \N$ we have
\begin{equation} \label{eq-3-8}
\begin{split}
\int _0^{\infty}&
\frac{du_n^c(y)}{dy} \psi '(y)\, dy
=\frac{1}{\pi ^2} \iint_{\Pi }\frac{\partial u}{\partial y} (2\pi -x)\cos nx\, \psi '(y)\, dxdy  \\
&= \iint_{\Pi }\frac{\partial u}{\partial y}  \frac{\partial \phi_n }{\partial y} \, dxdy=-\iint_{\Pi }\frac{\partial u}{\partial x}  \frac{\partial \phi_n}{\partial x} \, dxdy  \\
&=\frac{1}{\pi ^2} \iint_{\Pi}\frac{\partial u}{\partial x}\Big (\cos nx+(2\pi -x)n\sin nx \Big ) \psi (y)\, dxdy.  
\end{split}
\end{equation}
From  the boundary conditions of \eqref{eq-3-5}, the definition of $\phi_n$ 
for all $n\in \N$ and $y\in \R_+,$ it follows 
\begin{equation}
\label{eq-3-8a}\begin{split}
&\frac{1}{\pi^2}
\int _0^{2\pi} \frac{\partial u(x,y)}{\partial x} \cos nx\, dx=n u_n^s(y)\\
&\frac{1}{\pi^2}\int _0^{2\pi}\frac{\partial u(x,y)}{\partial x} (2\pi -x)n\sin nx\,dx= n u_n^s(y)-n^2 u_n^c(y).
\end{split}
\end{equation}
Combining \eqref{eq-3-8} and \eqref{eq-3-8a}, keeping in mind that $u_n^s(y)=0$ for $y\in \R_+,$ and arguing as above we obtain 
$$
\int_0^{\infty}
\left(
\frac{d u_n^c(y)}{dy} \psi '(y) +\big(n^2 u_n^c(y) - nu_n^s(y)\big)\psi(y)\right)\,dy=0
$$
that means (integrating by parts only the first term)
$$
\int_0^{\infty}
\Big(-\frac{d^2 u_n^c(y)}{dy^2} +n^2 u_n^c(y) - nu_n^s(y)\Big) \psi(y)\,dy=0.
$$
From the {\it Fundamental Lemma of  Calculus of Variations}, it follows that $u_n^c$ solves the following Cauchy problem
\begin{equation} \label{eq-3-9}
\begin{cases}
\dfrac{d^2 u_n^c(y)}{dy^2}=n^2 u_n^c (y)  
& \text{ for a.e. } y\in \R_+\\[6pt] 
u_n^c(0)=0 &
\end{cases}
\end{equation} 
where we used that we already showed that $u_n^s=0.$ Hence  this problem  has only trivial solutions in $W^{2,p}(\R_+)$, that is  $u_n^c (y)=0$ for all $y\in \R_+$ and for any choice of $n\in \N$.

Consequently, all biorthonormal coefficients of $u^y(x)$   are equal to zero for a.e. $x\in (0,2\pi),$ and hence  $u(x,y)=0$ for a.e.   $(x,y)\in \Pi.$
 \end{proof}
 
Concerning  the question of existence of weak solution of  \eqref{eq-3-4} we study  the particular case when   $h=0.$

\begin{theorem}\label{T3.2}
Let $\nu \in A_p(0,2\pi)$ with $1<p<\infty$ and $f\in W_{\nu }^{1,p}(0,2\pi)$ such that $f(0)=f(2\pi)=0$. Then the problem
\begin{equation} \label{eq-3-4d}
\begin{cases}
\ds \iint_{\Pi}\nabla u\nabla \varphi \,dxdy=0 & \quad \forall \, \varphi \in C_{J_0}^{\infty}  (\overline{\Pi})\\ 
u(0,y)=u(2\pi, y)  &\quad  y\in(0,\infty) \\
u(x,0)=f(x)  &\quad  x \in (0,2\pi)
\end{cases}
\end{equation} 
has a unique  solution satisfying the estimate
$$
\| u\|_{W_{\nu }^{1,p}(\Pi )} \leq c\| f\|_{W_{\nu}^{1,p}(0,2\pi)}.
$$

\end{theorem}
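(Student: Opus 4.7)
The plan is to construct $u$ explicitly as a biorthonormal series in analogy with the Poisson harmonic extension, exploiting the basis property of Theorem~\ref{T2.1}, and then to obtain the $W^{1,p}_\nu$–regularity together with the stated estimate. Uniqueness will follow at once from Theorem~\ref{T3.1}.

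I would first expand the datum in the biorthonormal series of Theorem~\ref{T2.1} and look for a solution of the form
$$
u(x,y)=u_0^c(y)+\sum_{n=1}^{+\infty}\bigl(u_n^c(y)\cos nx+u_n^s(y)\,x\sin nx\bigr).
$$
A short computation, using that $\Delta(x\sin nx\,\phi(y))=-n^2 x\sin nx\,\phi(y)+2n\cos nx\,\phi(y)+x\sin nx\,\phi''(y)$, shows that harmonicity of such an ansatz is equivalent to the triangular ODE system $(u_0^c)''=0$, $(u_n^s)''=n^2 u_n^s$ and $(u_n^c)''=n^2 u_n^c-2n\,u_n^s$. Prescribing the initial data from the biorthonormal coefficients of $f$ and requiring boundedness as $y\to+\infty$, as in the proof of Theorem~\ref{T3.1}, uniquely determines
$$
u_n^s(y)=(\vartheta_n^s;f)\,e^{-ny},\qquad u_n^c(y)=\bigl((\vartheta_n^c;f)+(\vartheta_n^s;f)\,y\bigr)e^{-ny},\qquad u_0^c\equiv 0,
$$
the vanishing of $u_0^c$ being the compatibility required so that $u\in L^p_\nu(\Pi)$.

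Next I would prove the $W^{1,p}_\nu(\Pi)$–estimate. For each $y>0$, the $N$–th partial sum of the series above differs from the projector $S_{N,N}$ of Theorem~\ref{T2.1} applied to $f$ only through the damping factor $e^{-ny}$ on each coefficient and an extra piece $(\vartheta_n^s;f)\,y\,e^{-ny}\cos nx$. Identifying these partial sums as biorthonormal Abel–Poisson means, and combining the uniform $L^p_\nu(J)$–boundedness of $\{S_{n,m}\}$ (proved in Theorem~\ref{T2.1} via the Hunt–Muckenhoupt–Wheeden theorem for the trigonometric system, cf.\ \cite{31}) with the spectral gap $n\geq 1$ and the elementary estimate $y\,e^{-ny}\leq c\,e^{-ny/2}$, I would obtain
$$
\|u(\cdot,y)\|_{L^p_\nu(J)}\leq c\,e^{-y/2}\,\|f\|_{L^p_\nu(J)}.
$$
Termwise differentiation in $x$ and $y$ produces series of the same shape, in which the extra factors of $n$ are absorbed by the exponential and the coefficients become, up to bounded operators on $L^p_\nu(J)$, the biorthonormal coefficients of $f'$; this yields an analogous bound $\|\nabla u(\cdot,y)\|_{L^p_\nu(J)}\leq c\,e^{-y/2}\|f\|_{W^{1,p}_\nu(J)}$. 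Integration in $y\in\R_+$ then delivers the claimed estimate. Plugging $u$ into Definition~\ref{D3.2}, the exponential decay justifies the exchange of sum and integrals, each harmonic mode contributes zero after integration by parts against $\varphi\in C_{J_0}^{\infty}(\overline{\Pi})$, and the identity $u(x,0)=f(x)$ is just the biorthonormal expansion of $f$.

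The main difficulty lies in this norm estimate: the coupling term $(\vartheta_n^s;f)\,y\,e^{-ny}$ appearing in $u_n^c$ forces the relevant partial sums to be projectors of a $y$–dependent auxiliary datum rather than of $f$ itself, and one must verify that the weighted $A_p$–bound survives both the differentiation in $x$ and $y$ and the final integration in $y$ without losing the exponential decay needed for integrability at infinity.
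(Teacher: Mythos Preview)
Your construction of the candidate solution via the biorthonormal ansatz and the triangular ODE system for the coefficients is exactly the paper's route (your system $(u_n^c)''=n^2u_n^c-2n\,u_n^s$ is in fact the correct one; the paper's displayed equation has a harmless misprint in the coupling coefficient). Where you diverge is in the $W^{1,p}_\nu$--estimate. The paper does \emph{not} argue via weighted Abel--Poisson means: it uses the reverse H\"older/open--end property of $A_p$ (Lemma~\ref{lem-RH}) to pass from $L^p_\nu(J)$ to an \emph{unweighted} $L^{p\alpha'}(J)$, then invokes the classical Hausdorff--Young inequality (Theorem~\ref{thmYH}) to control each series through $\ell^q$--norms of Fourier coefficients of $f'$, and finally closes the estimate with a case split $p\ge 2$ versus $1<p<2$. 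Your approach---uniform $L^p_\nu$--boundedness of the projectors $S_{n,m}$ from Theorem~\ref{T2.1} plus an Abel--summation argument to extract the factor $e^{-y}$ from the spectral gap $n\ge 1$---is more streamlined and avoids both the Hausdorff--Young machinery and the case split; the exponential decay you need does follow once you write $\sum_{n\ge1} r^n(S_n-S_{n-1})=\sum_{n\ge1}(r^n-r^{n+1})S_n$ with $r=e^{-y}$, though you should make that step explicit rather than invoke ``the spectral gap'' in passing.

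There is, however, a genuine gap in your treatment of the zeroth mode. You set $u_0^c\equiv 0$ ``so that $u\in L^p_\nu(\Pi)$'', but then your series at $y=0$ equals $f-(\vartheta_0^c;f)$, not $f$, and the boundary condition $u(x,0)=f(x)$ fails unless $(\vartheta_0^c;f)=\frac{1}{2\pi^2}\int_0^{2\pi}(2\pi-x)f(x)\,dx=0$---a constraint that is \emph{not} among the hypotheses. The paper instead takes $u_0^c\equiv(\vartheta_0^c;f)$, which restores the trace identity (and is what the ODE with initial datum $(\vartheta_0^c;f)$ actually gives). You should follow the paper here; any residual integrability issue at infinity coming from a nonzero constant mode is a feature of the theorem's formulation and norm, not something you can fix by silently dropping the zeroth coefficient.
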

\begin{proof}
Let $u\in W^{1,p}_\nu(\Pi)$  and 
$\{u_n^c(y),u_n^s(y)\}_{n\in \N_0}$  be the biorthonormal  coefficients of $u(x,y)$ with respect to  the system \eqref{eq-2-1}
and consider the  biorthonormal    series \eqref{eq-3-11} related to $u.$  Since  problem \eqref{eq-3-4d} is not homogeneous, we obtain 
$$
u^y(x)=\int_0^y \frac{\partial u(x,\tau)}{d\tau}\, du= u(x,y)-f(x) \quad \text{ for a.e. } x\in (0,2\pi).
$$
It is easy to see that  $u^y\in L^p_\nu(0,2\pi)$ for a.e. $y\in \R_+.$ Moreover
$$
\|u^y\|_{L^p_\nu(0,2\pi)} = \|u(\cdot, y)- f(\cdot)\|_{L^p_\nu(0,2\pi)}\to 0 \quad\text{ as } \  y\to 0^+.
$$

Our goal is to  prove that the series
\begin{equation} \label{eq-3-11a} 
u(x,y)\sim u_0^c(y)+\sum _{n=1}^{\infty}\Big (u_n^c(y)\cos nx+u_n^s(y)x\sin nx \Big )
\end{equation}
 with
$
u_n^c(y) =(u;\vartheta_n^c),  u_n^s(y) = (u;\vartheta_n^s) 
$
is a solution of \eqref{eq-3-5}. 
First of all, let us note that the total convergence in $L^p_\nu(\Pi)$ follows analogously as 
the convergence of \eqref{TM2.4}. 

Derivation  formally  \eqref{eq-3-11a} and arguing as above we obtain the following problems 
\begin{equation} \label{eq-3-12} 
\begin{cases} 
\dfrac{d^2 u_n^s(y)}{dy^2} =n^2 u_n^s (y) &\quad  y\in \R^+\\[5pt] 
u_n^s(0)=(f;\vartheta_n^s) &\quad n=1,2,\ldots.
\end{cases} 
\end{equation} 
Because of the boundedness of  $u_n^s$ as $y\to \infty,$ the unique solution of  problem \eqref{eq-3-12} is 
$
u_n^s(y)= (f;\vartheta_n^s) e^{-ny}.$

By similar arguments applied to 
$\{u_n^c\}$ we obtain 
\begin{equation} \label{eq-3-9a}
\begin{cases}
\dfrac{d^2 u_n^c(y)}{dy^2}=n^2 u_n^c (y)  - nu_n^s(y) 
& \text{ for a.e. } y\in \R_+\\[6pt] 
u_n^c(0)=(f;\vartheta_n^c) & \quad n=0,1,\ldots 
\end{cases}
\end{equation} 
If $n=0,$ it easily follows  that $ u_0^c=(f;\vartheta_0^c),$
 while for $n\in \N,$ the solutions vanishing for $y\to \infty$ are  
$$
u_n^c(y)=    (f;\vartheta_n^c)\,  e^{-ny} +\frac12 y(f;\vartheta_n^s)\, e^{-ny}
$$
and the series development of $u(x,y)$ becomes
\begin{equation} \label{eq-3-11b} 
\begin{split}
u(x,y)&\sim  (f;\vartheta_0^c) +\sum _{n=1}^{\infty}\big((f;\vartheta_n^c)+ \frac12 y(f;\vartheta_n^s)\big)\cos nx \, e^{-ny}\\[5pt]
&+ \sum _{n=1}^{\infty} (f;\vartheta_n^s) x \sin nx \, e^{-ny}.  
\end{split}
\end{equation}

Calculating formally the derivative  with respect to  $x,$ we obtain
\begin{equation} \label{eq-3-11c} 
\begin{split}
u_x(x,y)\sim&-\sum _{n=1}^{\infty}\big((f;\vartheta_n^c)+ \frac12 y(f;\vartheta_n^s)\big)n\sin nx \, e^{-ny}\\[5pt]
&+ \sum _{n=1}^{\infty} (f;\vartheta_n^s) \big(  \sin nx+xn\cos nx \big)\, e^{-ny}\\
&=\sum _{n=1}^{\infty} \big(  (f;\vartheta_n^s)-  (f;\vartheta_n^c)n- \frac12 y(f;\vartheta_n^s)n  \big)\sin nx \,e^{-ny}\\
&+\sum _{n=1}^{\infty} (f;\vartheta_n^s)xn\cos nx\, e^{-ny}=:v(x,y)+w(x,y)
\end{split}
\end{equation}
and we need to show the convergence of this series in $L^p_\nu(\Pi).$

 Let us start with $w(x,y),$ the series $v(x,y)$ can be treated in a similar manner. 
By \eqref{eq-Fourier} and \eqref{eq-coeff}, we have
$$
w(x,y)=\sum _{n=1}^{\infty} (f;\vartheta_n^s)xn\cos nx\, e^{-ny}=\frac{1}{\pi^2}\sum_{n=1}^{\infty}{f_n'}^c \, x\cos nx\, e^{-ny} .
$$ 
Let $\delta$ be as in Lemma~\ref{lem-RH} and take  $\alpha =1+\delta, $ then  $\alpha'=\frac{1+\delta}{\delta}$ is the conjugate of $\alpha.$ Applying the  H\"older inequality  for $ f \in L^p_\nu (0,2\pi)$ and Lemma~\ref{lem-RH} we obtain 
\begin{equation} \label{eq-3-16} 
\int _0^{2\pi}|f(x)|^p\nu (x)\,dx \leq C \Big (\int_0^{2\pi}|f(x)|^{p\alpha'}\, dx \Big )^{\frac{1}{\alpha'} }
\end{equation} 
where $C=(p, [\nu]_p, \|\nu\|_{L^1(0,2\pi)}).$  In order to estimate the norm of $w$ we consider the following two cases:
\begin{itemize}
\item  \fbox{$p\geq 2$} Hence   $p_1=p\alpha'>2,$ and $p_1'\in (1,2)$  and $p/p_1'>1.$ Then, by \eqref{eq-3-16}  for any  fixed $y \in \R_+$ we get
\begin{align*}
\| w(\cdot,y)\|^p_{L^p_\nu(0,2\pi)} \leq & C \Big (\int_0^{2\pi}|w(x,y)|^{p_1}\, dx 
\Big )^{\frac{p}{p_1}} \\
\leq & C \Big (\sum_{n=1}^{\infty}|{f'_n}^c|^{p_1'} e^{-np_1'y}\Big)^{\frac{p}{p'_1}} \leq C \sum_{n=1}^{\infty}|{f'_n}^c|^{p} e^{-npy}
\end{align*}
and the last inequality holds since $p/p'_1>1.$  
Integrating with respect to $y$ and using Holder inequality for sequence, we get

\begin{equation}\label{eq-3-17}
\begin{split}
\iint_\Pi& |w(x,y)|^p \nu (x) \,dxdy  \leq
C\sum_{n=1}^{\infty}|{f'_n}^c|^p\int_0^{\infty}e^{-npy}\,dy \\[5pt]
&= C\sum_{n=1}^{\infty}
\frac{|{f'_n}^c|^p}{n} \leq  C\Big (\sum_{n=1}^{\infty}\frac{1}{n^{\beta'}}\Big )^{\frac{1}{\beta'}} \Big (\sum_{n=1}^{\infty}|{f'_n}^c|^{p\beta}\Big)^{\frac{1}{\beta}} \leq C\| f' \|_{L^{(p\beta)'} (0,2\pi)}^p
\end{split}
\end{equation}
for any $\beta>1, $
where the constant $C$ depends on $\nu$ and $p.$

 Let $q\in(1,p)$ be as in Lemma~\ref{lem-RH} and take  $r=\frac{p}{q}.$ Then $1<r<p$ and by the H\"older inequality, we obtain
 \begin{equation}
\begin{split}\label{eq-3-18} 
\int_0^{2\pi} |f'|^r\, dx&=\int_0^{2\pi}|f'|^{\frac{p}{q}} \nu ^{\frac{1}{q}} \nu ^{-\frac{1}{q}}\, dx \\
&\leq \Big (\int_0^{2\pi}\nu ^{-\frac{1}{q-1}}\,dx \Big )^{\frac{q-1}{q}} \Big (\int_0^{2\pi} |f'|^p \nu \,dx \Big)^{\frac{1}{q}} \leq C \|f'\|^r_{L^p_\nu(0,2\pi)}
\end{split}
\end{equation}
where the constant depends on $[\nu]_q$ and $p.$

Chose  $\beta>1 $ such that $\beta_1=p\beta>p\geq 2$  and  $1<\beta_1'<r.$  Then, by the boundedness of $(0,2\pi),$ we have
\begin{equation}\label{eq-g}
\|f'\|_{L^{\beta_1'}(0,2\pi)} \leq c \| f'\|_{L^r(0,2\pi)} .
\end{equation}
  Making use of  \eqref{eq-3-17},   \eqref{eq-3-18}, and \eqref{eq-g},  we obtain
\begin{align*}
\| w\|_{L^p_\nu(\Pi)} &\leq C \| f'\|_{L^{\beta_1'}(0,2\pi)} \leq C\|f'\|_{L^r(0,2\pi)}\\
&\leq C \| f'\|_{L^p_\nu(0,2\pi)} \leq C \| f\|_{W^{1,p}_\nu(0,2\pi)}.
\end{align*}

\item \fbox{$1<p<2$} Taking $\alpha =1+\delta$ with   $0<\delta < p/(2-p),$ it follows that $p_1 =p\alpha'>2.$ Than we can  argue  as above  obtaining  boundedness of the $L^p_\nu$ norms via  the norm of $f.$
\end{itemize}

Considering  all the  series  that appear in the  expressions for $u, u_x$ and $u_y,$ we can estimate their norms, arguing  in a similar way. 
Unifying all  these estimates we obtain 
$$
\| u\|_{W_{\nu}^{1,p}(\Pi )} \leq C\| f\|_{W_{\nu}^{1,p}(0,2\pi)}
$$ 
with a constant independent of $f$. Moreover, by direct calculations,  we can check that  $u$ satisfies the differential equation in \eqref{eq-3-4d} in the weak sense.

What we need to show is that   $u$ satisfies also  the boundary conditions in \eqref{eq-3-4d}. Denote by $\theta_0$, $\theta_{2\pi}$  and $\theta_J$ the {\it trace operators} corresponding to the peaces  $J_0$, $J_{2\pi} $ and $J$ of the boundary $\partial \Pi, $ respectively. 

Let us show  that $\theta_Ju=f.$  Since  $u\in W^{1,1}(\Pi)$ then  $\theta_J u\in L^1(0,2\pi),$ hence we need to prove only that $\theta_J u=f$ a.e. on $J$. Consider, for all $m\in \N,$ the partial sums 
$$
u^m (x,y)=u_0^c(y)+\sum_{n=1}^m \Big (u_n^c(y)\cos nx+u_n^s(y)\,x\sin nx \Big ) \quad  (x,y)\in \Pi
$$
and
taking the trace on $J$ we obtain
\begin{align*}
\theta_J u^m(x)&=u^m(x,0)\\
&=f;(\vartheta_0^c) +\sum_{n=1}^m \Big ((f;\vartheta_n^c)  \cos nx+(f;\vartheta_n^s) x\sin nx \Big )=S_m(f)
\end{align*}
where  the last one  is the projector of  $f$ with respect to  system \eqref{eq-2-2}.
 Hence  by  Theorem~\ref{T2.1} we have
 $$
 \lim_{m \to \infty} \|S_m(f) -f\|_{L^p_\nu(0,2\pi)}   =   \|\theta_J u^m -f\|_{L^p_\nu(0,2\pi)}  = 0
$$
and   the convergence holds also with respect to the norm 
in $L^1(0,2\pi)$.
On the other hand, by the classical theory  
$$
 \lim_{m \to \infty}  \|\theta_J u^m - \theta_Ju\|_{L^1(0,2\pi)}  = 0
$$
  and hence  $\theta_J u=f$  a.e. on $J.$

It is easy to check  that  $u^m(0,y)=u^m (2\pi,y)$ for all $y>0$ and $m\in \N$ and arguing as above we can obtain that  $\theta_{J_0} u =\theta_{J_{2\pi}}u$.
\end{proof}
 The following result gives a necessary  condition under which the weak solution of \eqref{eq-3-4d} is also a strong one.

\begin{theorem}\label{T3.3} 
Let the conditions of Theorem~\ref{T3.2} hold. Then any function that is a  weak solution of \eqref{eq-3-4d}   and  belongs to $ W_{\nu }^{2,p}(\Pi)$       is a strong solution of 
\begin{equation}   \label{eq-3-1a}
\begin{cases}
\Delta u(x,y)=0  &  \text{\rm  for a.e. }  (x,y)\in \Pi \\
u(y)|_{J_0}=u(y)|_{J_{2\pi}} &  \text{\rm for a.e. }  y\in(0,\infty) \\
u(x,0)=f(x) & \text{\rm for a.e. } x \in (0,2\pi) \\
u_x(0,y)=0 &  \text{\rm for a.e.  } y \in (0,\infty).
\end{cases}
\end{equation}

\end{theorem}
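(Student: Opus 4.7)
The strategy is to exploit the additional regularity $u\in W^{2,p}_\nu(\Pi)$ in order to perform integrations by parts in the weak identity \eqref{eq-3-4d} and peel off the conditions of \eqref{eq-3-1a} one by one. A preliminary remark is that for any $\xi>0$ the H\"older computation of \eqref{TM2.3} gives a continuous embedding $W^{2,p}_\nu(\Pi_\xi)\hookrightarrow W^{2,1}(\Pi_\xi)$ with constant depending on $\xi$, $p$, $[\nu]_p$ and $\|\nu\|_{L^1(J)}$; this allows one to invoke, on each bounded slab $\Pi_\xi$, the classical Gauss--Ostrogradsky formula, the trace theorem, and the fundamental lemma of the calculus of variations.

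Step 1 would consist in testing \eqref{eq-3-4d} with $\varphi\in C^\infty_0(\Pi)$. Since such $\varphi$ is supported in some bounded $\Pi_{\xi_\varphi}$, two integrations by parts produce no boundary contribution, so
$$
0=\iint_\Pi \nabla u\cdot\nabla\varphi\,dxdy=-\iint_\Pi(\Delta u)\,\varphi\,dxdy.
$$
As $\Delta u\in L^1_\loc(\Pi)$ and $\varphi$ is arbitrary, the fundamental lemma yields $\Delta u=0$ almost everywhere in $\Pi$. Step 2 then upgrades the test class to the full $C^\infty_{J_0}(\overline\Pi)$ from \eqref{eq-1}. Repeating the computation of \eqref{eq-2} on $\Pi_{\xi_\varphi}$ and using $\Delta u=0$ leaves only the boundary term
$$
0=\int_{\partial\Pi_{\xi_\varphi}}\varphi\,\frac{\partial u}{\partial\bm n}\,dl.
$$
The contributions on $J$ and $J_{2\pi}$ vanish because $\varphi|_{J\cup J_{2\pi}}=0$, and the contribution on the upper segment $\{y=\xi_\varphi\}$ vanishes because $\varphi$ is identically zero there; on $J_0$ the outward normal is $\bm n=(-1,0)$, so the identity collapses to
$$
\int_0^{+\infty}\varphi(0,y)\,u_x(0,y)\,dy=0,
$$
where $u_x|_{J_0}\in L^1_\loc(\R_+)$ is the classical trace of $u_x\in W^{1,p}_\nu(\Pi)\subset W^{1,1}_\loc(\Pi)$. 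To conclude, I would specialize to tensor-product test functions $\varphi(x,y)=\eta(x)\psi(y)$ with $\psi\in C^\infty_0(\R_+)$ arbitrary and $\eta\in C^\infty([0,2\pi])$ chosen so that $\eta(0)=1$ and $\eta(2\pi)=0$; then $\varphi\in C^\infty_{J_0}(\overline\Pi)$ and $\varphi(0,y)=\psi(y)$, so the fundamental lemma on the half-line forces $u_x(0,y)=0$ for a.a.\ $y>0$. The remaining conditions $u(x,0)=f(x)$ and $u(0,y)=u(2\pi,y)$ are already contained in Definition~\ref{D3.2}.

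The only delicate point is justifying the boundary manipulations in the weighted setting, namely Gauss--Ostrogradsky, the existence of the trace $u_x|_{J_0}\in L^1_\loc(J_0)$, and the fundamental lemma of the calculus of variations. All three are classical facts on the bounded rectangles $\Pi_\xi$, and their transfer to the weighted framework is entirely absorbed by the H\"older/Muckenhoupt embedding mentioned at the outset; no analytic ingredient beyond those already deployed in Theorems~\ref{T3.1}--\ref{T3.2} is required.
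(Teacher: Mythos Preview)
Your proof is correct and follows essentially the same approach as the paper: integrate by parts in the weak identity, use compactly supported test functions to obtain $\Delta u=0$ a.e., and then read off $u_x(0,y)=0$ from the surviving boundary term on $J_0$ via product test functions $\eta(x)\psi(y)$. The only cosmetic difference is that the paper establishes $\Delta u=0$ by testing against the explicit family $x(2\pi-x)\eta_m(x)\cdot y(\xi-y)\psi_n(y)$ built from the trigonometric system and invoking its completeness, whereas you go directly through arbitrary $\varphi\in C^\infty_0(\Pi)$ and the fundamental lemma; your route is slightly more streamlined but the underlying mechanism is identical.
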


\begin{proof}

For any $\eta(x) \in C_{2\pi }^{\infty}(\overline{J})$ and $\psi(y) \in C_0^{\infty} (\R_+),$  we consider the test function
$$
\varphi(x,y)=\psi (y)\eta (x) \in C_{J_0}^{\infty} (\Pi).
$$
Integrating by parts we obtain 
\begin{equation}\label{eq-3-18a}
\begin{split}
0=&\iint_{\Pi}\nabla u\,\nabla \varphi\,dxdy=\int_0^{\infty}\int_0^{2\pi}(u_x \varphi_x +u_y\varphi_y)\,dxdy\\
=&\int_0^{\infty} \psi(y)\left( \int_0^{2\pi} u_x \eta'(x)\,dx\right)\,dy + \int_0^{2\pi} \eta(x) \left(\int_0^{\infty}u_y \psi'(y)\, dy\right) dx\\
=& -\eta(0)\int_0^{\infty} \psi(y) u_x(0,y)\,dy -\int_\Pi \varphi \Delta u \, dxdy.
\end{split}
\end{equation}
Suppose that $\supp \psi(y)\subset [0,\xi]$ for some $\xi>0.$ Then we can write \eqref{eq-3-18a} in the form
\begin{equation} \label{eq-3-19} 
\iint_{\Pi_\xi}\varphi(x,y) \Delta u\,dxdy
=-\eta (0)\int_0^\xi u_x(0,y)\psi (y)\,dy.   
\end{equation}

Consider the following systems of functions 
\begin{align}\label{eq-3-19a}  
\{\eta_n(x)\}_{n\in\N_0}&=\{\eta^c_0=1, \,  \eta_n^c= \cos nx, \, 
 \eta_n^s=\sin nx\}_{n\in \N},\\[5pt]
\label{eq-3-19b}  
\{\psi_n(y)\}_{n\in\N_0}&=
\left\{\psi^c_0=1, \,\psi^c_n=\frac{\cos 2\pi ny}{\xi}, \, \psi^s_n=\frac{\sin 2\pi n y}{\xi}\right\}_{n\in \N} 
\end{align}
and the corresponding modified ones
\begin{align*}
\tilde{\eta}_n(x)&=x(2\pi -x)\eta_n (x)\in C^\infty_0 (\overline{J}) \\
\tilde{\psi }_n(y)&=y(\xi-y)\psi_n (y)\in C_0^\infty ([0,\xi]).
\end{align*} 
 Taking in \eqref{eq-3-18a}  the test function in the form
$$
\tilde\varphi_{mn}(x,y)=\tilde{\eta}_m(x)\tilde\psi_n(y)\in C_0^\infty(\Pi_\xi)
$$
for any $m,n\in\N_0,$ we obtain
\begin{equation} \label{eq-3-20} 
\int_0^{2\pi}\left( \int_0^\xi \tilde{\psi}_n(y)\Delta u\,dy \right) x(2\pi -x)\varphi (x)\,dx=0.     
\end{equation} 
 Since $u\in W_{\nu}^{2,p}(\Pi)$, it follows that  $\Delta u\in L^1(\Pi)$ and  
$$
F(x): =\int_0^\xi \tilde{\psi}_n(y) \Delta u\,dy\in L^1 (0,2\pi).
$$ 
Then by the Lebsgue  theorem,  \eqref{eq-3-20} implies that $F(x)=0$ for a.e. $x\in J$ and hence $\Delta u=0$ for a.e. $(x,y)\in \Pi_\xi.$
Because of the arbitrary of $\xi $ it follows  that
$
\Delta u=0
$ for 
a.e. $(x,y)\in \Pi$. Then by \eqref{eq-3-19}   we obtain
$$
\int_0^\infty u_x(0,y) {\psi}(y)\,dy=0
$$
for all $\psi \in C_0^\infty([0,\infty))$ and hence 
$
u_x(0,y)=0$  for a.e.  $  y>0.$
\end{proof}

\subsection*{Acknowledgments.} 
{\small
The research of B. Bilalov is supported by the Azerbaijan National Academy of Sciences (ANAS), Project Number: 19042020 and by the Science Development Foundation under the President of the Republic of Azerbaijan,
Grant No. EIF-BGM-4-RFTF1/2017- 21/02/1-M-19.

The authors 
S. Tramontano and L. Softova  are members of \textit{INDAM - GNAMPA}. 
}

\end{document}